\documentclass[12pt]{amsart}
\usepackage{graphicx}
\usepackage{tikz}        
\usepackage{amssymb}
\usepackage{epstopdf}
\usepackage[mathscr]{eucal}
\usepackage{amsmath,amssymb,amscd}
\usepackage{color}
\usepackage{epsfig}
\usepackage{bbold}
\newtheorem{theorem}{Theorem}

\newtheorem{definition}{Definition}

\newtheorem{proposition}{Proposition}
\theoremstyle{definition}
\newtheorem{remark}{Remark}

\setcounter{tocdepth}{3}
\setcounter{secnumdepth}{3}

\def \mb{\mathbb}



\def \R{\mb R}                 


\def \D{\Delta}         
\def \th{\theta}       

\newcommand {\csn} {\text{csn}}

\newcommand {\sn} {\text{sn}}
\newcommand {\ctn} {\text{ctn}}

\def \S{\mb S}        
\def \H{\mb H}        
\def \M{\mb M}        

\newcommand {\q} {\mathbf{q}}

\newcommand {\F} {\mathbf{F}}

\def \and{\mbox{and}}
\usepackage[top=4cm, bottom=4cm, left=3.5cm, right=3.5cm]{geometry}
\DeclareGraphicsRule{.tif}{png}{.png}{`convert #1 `dirname #1`/`basename #1 .tif`.png}

\title{ Three-dimensional  central configurations\\ in $\H^3$ and $\S^3$}
\begin{document}
\maketitle
\markboth{Suo Zhao and Shuqiang Zhu}
\author{\begin{center}
{ \bf Suo Zhao}$^{1}$,  {\bf Shuqiang Zhu$^2$}\\
\bigskip
{\footnotesize $^1$Department of Mathematics, Sichuan University, Chengdu 610064, P. R. China\\
and\\
$^2$Department of Mathematics and Statistics,
University of Victoria,
P.O.~Box 1700 STN CSC,
Victoria, BC, Canada, V8W 2Y2\\
\bigskip
saaki@163.com,  zhus@uvic.ca\\
}
\end{center}


 \begin{abstract}
  We show that  each central configuration  in the three-dimensional hyperbolic sphere is equivalent to one central configuration on a particular two-dimensional hyperbolic sphere. However, there exist both special and ordinary central configurations in the three-dimensional sphere that are not confined to any two-dimensional sphere.
 \end{abstract}

        \vspace{2mm}

   \textbf{Key Words:} celestial mechanics; curved $N$-body problem; central configurations.
        \vspace{8mm}

\section{introduction}
The curved $N$-body problem is a natural extension of the Newtonian $N$-body problem in $\R^3$ to isotropic, complete, simply connected
spaces of constant nonzero curvature, $\S^3$ and $\H^3$. For its history, we refer the readers to \cite{Dia12a}, where the equations of motion are written in extrinsic coordinates in $\R^4$ for $\S^3$, and the Minkowski space $\R^{3,1}$ for $\H^3$. This approach, different form more traditional ones like \cite{Kil98}, led to fruitful results, especially in the study of relative equilibria, which are rigid motions that become fixed points in some rotating coordinates system, \cite{DPS05,Dia12a,Dia12b,Dia13,Dia-A,DSZ-A}.

Based on the work of Diacu, especially \cite{Dia12a,Dia13}, the authors of \cite{DSZ-B} proposed to study central configurations.
Roughly speaking, central configurations are special arrangements of the point particles and the exact definition will be given later. The central configurations of the Newtonian $N$-body problem, first formulated by Laplace \cite{Lap}, are quite important in the study of the Newtonian $N$-body problem. In \cite{DSZ-B},  the authors have also showed the importance of central configurations for the curved $N$-body problem. For instance, each central configuration gives rise to a one-parameter family of relative equilibria, and central configurations are the bifurcation points in the topological classification of the curved $N$-body problem.

Some questions about these configurations were also raised in \cite{DSZ-B}.  For example, find all central configurations for $N$ point particles when $N$ is small (the three-particles case has been recently solved and will appear in a forthcoming paper). Another interesting problem is to prove (or disprove) that for generic $N$ point particles, the number of equivalent classes of central configurations is finite. Though these questions are similar to those of the Newtonian $N$-body problem  \cite{Moe-A, Sma98}, the answers are quite different in general. For example, Moulton's theorem concerning the collinear central configurations has been generalized to $\H^3$, \cite{DSZ-B}, but it can not be directly generalized to $\S^3$.

In this paper, we put into the evidence another difference: each central configuration on $\H^3$ is equivalent to one central configuration on $\H^2_{xyw}$, which will be defined later, whereas in $\S^3$ there are central configurations that are not confined to any two-dimensional sphere. In some sense, the number of central configurations in $\H^3$ is smaller than that of $\S^3$.  When we consider the Wintner-Smale conjecture in $\H^3$ raised in \cite{DSZ-B} asking whether the number of classes of central configurations in $\H^3$ is finite or not for generic $N$ point particles, we only need to study the problem on $\H^2_{xyw}$.

The paper is organized as follows: in Section \ref{basic}, we recall the basic setting of the curved $N$-body problem and the corresponding facts about central configurations; in Section \ref{h3}, we prove the result about central configurations in $\H^3$;  in Section \ref{s3}, we construct a two-parameter family  of three-dimensional central configurations in $\S^3$.

 \section{the curved $N$-body problem  and central configurations}\label{basic}
 \subsection{Equations of motion}

  As done in \cite{Dia12a,Dia13}, the equations will be written in $\R^4$ for $\S^3$ and in the Minkowski space $\R^{3,1}$, for $\H^3$.  For convenience, we understand the two linear spaces as $\R^4$ endowed with two inner products: for two vectors, $\q_1=(x_1,y_1,z_1,w_1)^T$ and $\q_2=(x_2,y_2,z_2,w_2)^T$, the inner products are given by
     \[ \q_1\cdot \q_2 = x_1x_2+y_1y_2+z_1z_2 +\sigma w_1w_2, \]
     where $\sigma=1$ for the Euclidean space and $\sigma=-1$  for the Minkowski space. We define the unit sphere $\S^3$ and the unit hyperbolic sphere $\H^3$ as
    \begin{equation*}
     \begin{split}
    \S^3&:=
             \{(x,y,z,w)^T\in\mathbb R^4\ \! |\ \!
             x^2+y^2+z^2+w^2=1\}\,\ \  {\rm and }\\
   \H^3&:=
                       \{(x,y,z,w)^T\in\mathbb R^{4}\ \! |\ \!
                       x^2+y^2+z^2-w^2=-1, \ w>0\},
     \end{split}
     \end{equation*}
  respectively. We can merge these two manifolds into
    \[  \M^3:=
              \{(x,y,z,w)^T\in\mathbb R^{4}\ \! |\ \!
              x^2+y^2+z^2+\sigma w^2=\sigma, \ {\rm with}\ w>0\ {\rm for}\ \sigma=-1\}. \]

    Given the positive masses $m_1,\dots, m_N$, whose positions are described by  the
    configuration $\q=(\q_1,\dots,\q_N)\in(\M^3)^N$,
    $\q_i=(x_i,y_i,z_i,w_i)^T,\ i=\overline{1,N}$, we define the
    singularity set
     \begin{equation}
       \D=\cup_{1\le i<j\le N}\{\q\in (\M^3)^N\ \! ; \ \! \q_i\cdot\q_j=\pm 1\}.
       \notag\end{equation}
 Let $d_{ij}$ be  the geodesic distance between the point masses $m_i$ and $m_j$, which is computed by
 \[  d_{ij}(\q)= \arccos (\q_i \cdot \q_j) \ {\rm for}\ \S^3, \ \ d_{ij}(\q)= {\rm arccosh} (-\q_i \cdot \q_j) \ {\rm for}\ \H^3.   \]
  The force function $U$ ($-U$ being the potential function) in $(\M^3)^N\setminus\D$ is
    $$
    U(\q):=\sum_{1\le i<j\le N}m_im_j\ctn d_{ij}(\q),
    $$
    where $\ctn (x)$ stands for $\cot (x)$ in $\S^3$ and $\coth (x)$ in $\H^3$. We also introduce two more notations, which unify the trigonometric and hyperbolic   functions,
    \[ \sn (x)= \sin(x) \ {\rm or}\ \sinh(x), \
    \ \csn (x)= \cos(x) \ {\rm or}\ \cosh(x). \]
    Define the kinetic energy as $T(\dot \q)= \sum_{1\le i\le N}m_i \dot \q_i \cdot \dot \q_i$, $\dot \q=(\dot \q_1,...,\dot\q_N)$.  Then the curved $N$-body problem is given by the Lagrange system on $T((\M^3)^N\setminus\D)$, with
    \[  \ L(\q,\dot\q):=T(\dot\q)+U(\q).   \]

 Using variational methods, we obtain the equations of motion in $\S^3$ and in $\H^3$, \cite{DSZ-B}.  Merged into one, they are
 \begin{equation*}
    \begin{cases}
    \ddot\q_i=\sum_{j=1,j\ne i}^N\frac{m_im_j [\q_j-\csn d_{ij}\q_i]}{\sn^3 d_{ij}}-\sigma m_i(\dot{\q}_i\cdot \dot{\q}_i)\q_i\cr
    \q_i\cdot \q_i=\sigma, \ \ \ \ i=\overline{1,N}.
    \end{cases}
    \end{equation*}
 The first part of the acceleration access from the gradient of the force function, $\nabla_{\q_i}U(\q)$,  and we will denote it by $\F_i$.  It is the sum of $\F_{ij}=\frac{m_im_j [\q_j-\csn d_{ij}\q_i]}{\sn^3 d_{ij}}$ for $j\ne i$.

 \subsection{Central configurations }
 \begin{definition}
 A configuration $\q\in (\M^3)^N\setminus \D$ is called a \emph{central configuration} if there is some constant $\lambda$ such that
 \begin{equation}\label{CCE}
 \nabla_{\q_i}U(\q)=\lambda\nabla_{\q_i} I(\q),\ i=\overline{1,N},
 \end{equation}
 where $\nabla$ is the gradient operator in $\M^3$, $I(\q)= \sum _{i=1}^N m_i (x_i^2+y_i^2)$, and the explicit form of $\nabla_{\q_i} I$ is

 \begin{equation}\label{I}
    2m_i
         \begin{bmatrix}
          x_i(w_i^2+ z_i^2)\\
          y_i(w_i^2+  z_i^2)\\
         -  z_i(x_i^2+ y_i^2)\\
          -  w_i(x_i^2+ y_i^2)
          \end{bmatrix}{\rm in}\ \ \!  T(\S^3)^N\ \ {\rm and}\ \  2m_i
                  \begin{bmatrix}
                   x_i(w_i^2- z_i^2)\\
                   y_i(w_i^2- z_i^2)\\
                   z_i(x_i^2+ y_i^2)\\
                    w_i(x_i^2+ y_i^2)
                   \end{bmatrix} {\rm in}\ \ \!  T(\H^3)^N.
 \end{equation}

 \end{definition}

 Since the two functions $U$ and $I$ are both invariant under the group action of $SO(2)\times SO(2)$ (in the case of $\S^3$) and $SO(2)\times SO(1,1)$ (in the case of $\H^3$), it is easy to check that a central configuration remains a central configuration after an $SO(2)\times SO(2)$ action (or an $SO(2)\times SO(1,1)$ action), \cite{DSZ-B}. Two central configurations are said to be \emph{equivalent} if one can be transformed to the other by these group actions. When we say a central configuration, we mean a class of central configurations as defined by the equivalence relation.

 A central configuration with $\lambda=0$ is called a \emph{special central configuration}, which only occurs in $\S^3$, \cite{Dia12a}. Otherwise, it is called an \emph{ordinary central configuration}. A central configuration lying on a geodesic  is called a \emph{geodesic central configuration}. A central configuration lying on a two-dimensional sphere  is called an \emph{$\S^2$ central configuration},  a central configuration lying on a two-dimensional hyperbolic sphere  is called an \emph{$\H^2$ central configuration}. All the other central configurations are called \emph{three-dimensional central configurations}.

 Here, a two-dimensional  sphere (hyperbolic sphere) means a sphere (hyperbolic sphere)  isometric to the unit sphere (hyperbolic sphere) in $\R^3$ ($\R^{2,1}$). It is  the non-empty intersection of $\M^3$  with a three-dimensional linear subspace: $\{ (x,y,z,w)^T\in \R^4| ax+by+cz+dw=0 \}$, \cite{BH99}.
  We begin with the following result.

 \begin{proposition}\label{gradient_of_I} Let $V=\{ (x,y,z,w)^T\in \R^4| cz+\sigma dw=0 \}$.
 If a configuration $\q=(\q_1, \cdots, \q_N)$ lies on the two-dimensional  sphere (hyperbolic sphere): $ V \cap \M^3$,   then $\nabla_{\q_i} I$ is in $V$ for  $i=\overline{1,N}$.
 \end{proposition}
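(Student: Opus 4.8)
The plan is to verify directly that the explicit formula for $\nabla_{\q_i} I$ given in \eqref{I} lands in the subspace $V = \{cz + \sigma dw = 0\}$ whenever the point $\q_i = (x_i, y_i, z_i, w_i)^T$ satisfies $c z_i + \sigma d w_i = 0$. The point of the proposition is that the coordinate pair $(z,w)$ enters $\nabla_{\q_i} I$ only through the third and fourth components, and on those components the gradient is (up to a scalar depending on $i$) a multiple of $(z_i, w_i)$ in the $\S^3$ case and of $(z_i, -w_i)$ — equivalently $(z_i, \sigma w_i)$ with the hyperbolic sign convention — in the $\H^3$ case. So the whole argument reduces to checking that the linear functional $(z,w) \mapsto cz + \sigma dw$ annihilates the relevant two-vector.

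Concretely, I would proceed as follows. First, recall from \eqref{I} that the third and fourth components of $\nabla_{\q_i} I$ are, in the spherical case, $-2m_i z_i(x_i^2+y_i^2)$ and $-2m_i w_i(x_i^2+y_i^2)$; the common factor $-2m_i(x_i^2+y_i^2)$ is a scalar, so the $(z,w)$-part of $\nabla_{\q_i} I$ is proportional to $(z_i, w_i)^T$. Since $\q_i$ lies on $V \cap \M^3$ we have $c z_i + \sigma d w_i = 0$ with $\sigma = 1$, hence $c z_i + d w_i = 0$, and therefore the functional $(x,y,z,w) \mapsto cz + \sigma dw$ (which depends only on $z,w$) vanishes on $\nabla_{\q_i}I$ as well. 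In the hyperbolic case $\sigma = -1$, the third and fourth components of $\nabla_{\q_i}I$ are $2m_i z_i(x_i^2+y_i^2)$ and $2m_i w_i(x_i^2+y_i^2)$, again proportional to $(z_i, w_i)^T$, and the hypothesis reads $c z_i - d w_i = 0$; the same one-line check shows $c z_i - d w_i = 0$ forces the gradient into $V$. In both cases the first two components of $\nabla_{\q_i}I$ are irrelevant because the defining equation of $V$ has no $x$ or $y$ term.

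I would also include the short remark that $V \cap \M^3$ is genuinely a two-dimensional sphere (resp.\ hyperbolic sphere): this follows from the criterion quoted from \cite{BH99} in the text, namely that such a slice is the intersection of $\M^3$ with a three-dimensional linear subspace of the form $\{ax+by+cz+dw=0\}$, here with $a=b=0$ and the pair $(c,d)$ scaled so that $cz+\sigma dw = 0$. One should note it is non-empty and of the right signature — for $\S^3$ any such hyperplane through the origin meets the sphere in a great $2$-sphere, and for $\H^3$ one needs the hyperplane to meet the hyperboloid, which it does since the $(z,w)$-direction is never the sole timelike direction here (the plane still contains the whole $xy$-plane, which is spacelike and meets the cone's interior). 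But strictly this is background already asserted in the excerpt, so I would state it in one sentence and move on.

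There is essentially no obstacle: the computation is a direct substitution into the already-displayed formula \eqref{I}, and the only thing to be careful about is bookkeeping the sign $\sigma$ so that the spherical and hyperbolic cases are handled uniformly — in particular making sure that the factor $w_i^2 \pm z_i^2$ appearing in the first two components of $\nabla_{\q_i}I$ plays no role, and that the sign discrepancy between the third/fourth components in the two cases is exactly compensated by the $\sigma$ in the definition of $V$. I would write the proof in the merged $\M^3$ notation to keep it to a few lines.
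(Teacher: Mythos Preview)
Your proposal is correct and is essentially the same argument as the paper's: both amount to plugging the explicit formula \eqref{I} into the defining linear functional $cz+\sigma dw$ of $V$ and observing that the result is a scalar multiple of $cz_i+\sigma dw_i=0$. The only difference is cosmetic---the paper does it in one merged line via the $\sigma$-inner product $\nabla_{\q_i}I\cdot(0,0,c,d)^T=-\sigma\,2m_i(x_i^2+y_i^2)(cz_i+\sigma dw_i)$, whereas you split into the two cases; your suggestion to write it in the merged $\M^3$ notation would make the two proofs literally identical.
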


 \begin{proof}
 By the explicit form of $\nabla_{\q_i} I$, equation \eqref{I}, we get
 \[ \nabla_{\q_i} I \cdot (0,0,c,d)^T= -\sigma 2m_i (x_i^2+y_i^2)(cz_i+\sigma dw_i)=0. \]
 This equation completes the proof.
 \end{proof}

 \section{Central configurations on $\H^3$}\label{h3}
Let us define $\H_{xyw}^2:=\{(x,y,z,w)^T\in \R^4|  z=0\} \cap \H^3$. We can prove the following result.

 \begin{theorem}\label{theorem_on_h3}
    Each central configuration in $\H^3$ is equivalent to some central configuration on $\H^2_{xyw}$.
    \end{theorem}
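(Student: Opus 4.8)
The plan is to show that every central configuration $\q$ in $\H^3$ can be moved, by an element of the boost subgroup $SO(1,1)$ acting on the $zw$-plane, to one with $z_i=0$ for all $i$. Since $w_i>0$ on $\H^3$, the ratios $z_i/w_i$ are all well defined, and the claim will follow once we prove that they are all equal to one common value $\theta$ with $|\theta|<1$: the boost with $\tanh t=-\theta$ then sends every $z$-coordinate to $0$ while keeping $w>0$ and leaving the $x,y$-coordinates (hence the constraint) untouched, so the transformed configuration still lies on $\M^3$, namely on $\H^2_{xyw}=\{z=0\}\cap\H^3$; as $SO(1,1)$ is a subgroup of $SO(2)\times SO(1,1)$ it is equivalent to $\q$.

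First I would write the central configuration equations \eqref{CCE} in coordinates and isolate the $z$- and $w$-components. Using $\nabla_{\q_i}U=\F_i$ together with the explicit form \eqref{I} of $\nabla_{\q_i}I$ in $T(\H^3)^N$, the $z$-component reads $\sum_{j\ne i}\frac{m_im_j(z_j-\csn d_{ij}\,z_i)}{\sn^3 d_{ij}}=2\lambda m_i(x_i^2+y_i^2)z_i$ and the $w$-component reads $\sum_{j\ne i}\frac{m_im_j(w_j-\csn d_{ij}\,w_i)}{\sn^3 d_{ij}}=2\lambda m_i(x_i^2+y_i^2)w_i$. Multiplying the first by $w_i$, the second by $z_i$, and subtracting, both the $\csn$-terms and the $\lambda$-terms cancel, leaving the key relation
\[
\sum_{j\ne i}\frac{m_im_j}{\sn^3 d_{ij}}\,(z_jw_i-z_iw_j)=0,\qquad i=\overline{1,N}.
\]
This cancellation is, in effect, the same phenomenon recorded in Proposition \ref{gradient_of_I}: the $zw$-part of $\nabla_{\q_i}I$ is a scalar multiple of $(z_i,w_i)^T$.

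The heart of the argument is to combine this relation with the positivity features special to $\H^3$. Since $d_{ij}>0$ for $\q\notin\D$, the coefficients $s_{ij}:=m_im_j/\sn^3 d_{ij}$ are strictly positive, and since $w_j>0$ on $\H^3$ the sums $\sum_{j\ne i}s_{ij}w_j$ are strictly positive. Dividing the relation by $w_i\sum_{j\ne i}s_{ij}w_j$ rewrites it as $z_i/w_i=\sum_{j\ne i}\alpha_{ij}\,(z_j/w_j)$, with weights $\alpha_{ij}=s_{ij}w_j/\sum_{k\ne i}s_{ik}w_k>0$ summing to $1$; that is, each ratio $z_i/w_i$ is a genuine convex combination of the remaining ones. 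Choosing an index at which $z_i/w_i$ is maximal then forces all the ratios to coincide. Finally, the constraint $x_i^2+y_i^2+z_i^2-w_i^2=-1$ gives $z_i^2<w_i^2$, so the common ratio $\theta$ satisfies $|\theta|<1$, and the boost described above finishes the proof.

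I expect the one genuinely non-obvious point to be the reformulation as a convex combination and the maximum-principle step; the rest is bookkeeping. It is worth noting that this is precisely where $\H^3$ and $\S^3$ part ways: the analogous curl relation $\sum_{j\ne i}s_{ij}(z_jw_i-z_iw_j)=0$ holds verbatim in $\S^3$, but there neither $w_i>0$ nor $\sum_{j\ne i}s_{ij}w_j\ne 0$ is available, so the convexity argument collapses — which is consistent with the three-dimensional central configurations constructed in Section \ref{s3}.
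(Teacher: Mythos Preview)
Your proof is correct and follows essentially the same line as the paper's: both derive the identity $\sum_{j\ne i}s_{ij}(z_jw_i-z_iw_j)=0$ from the $z$- and $w$-components of \eqref{CCE} and then run a maximum argument on the ratios $z_i/w_i$ (the paper parametrizes these as $\tanh\phi_i$ via $z_i=\rho_i\sinh\phi_i$, $w_i=\rho_i\cosh\phi_i$ and argues by contradiction at the maximal $\phi_i$, whereas you phrase the same step as a convex-combination maximum principle). The concluding $SO(1,1)$ boost to $\H^2_{xyw}$ is identical in both arguments.
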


\begin{proof}
 We first show that all central configurations in $\H^3$ must lie on a two-dimensional hyperbolic sphere. Then we show that there is some action $\chi\in SO(2)\times SO(1,1) $ which transforms that hyperbolic sphere to $\H^2_{xyw}$. Thus by the definition of equivalent central configurations, each central configuration in $\H^3$ is equivalent to some central configuration on $\H^2_{xyw}$.

 Consider the two-dimensional hyperbolic sphere: $\H^2_{\phi}:=\{ (x,y,z,w)^T\in \R^4| \cosh \phi z- \sinh \phi w=0 \}\cap \H^3 $. The intersection is not empty, since the linear subspace and $\H^3$ share the point $(0,0, \sinh \phi,\cosh \phi )^T$. We show that each central configuration will be confined to only one such two-dimensional hyperbolic sphere.

 Assume that this is not the case. Suppose that there is a  central configuration $\q=(\q_1, \cdots, \q_N)$ with $\q_i \in \H^2_{\phi_i}$,  $\phi_1\ge \phi_i$ for $i\ne 1$ and there is at least one $i$ such that $\phi_1> \phi_i$. Then $\q_i$ can be written as $(x_i,y_i,\rho_i\sinh \phi_i,\rho_i\cosh \phi_i )^T$ with $\rho_i>0$ since $w_i=\rho_i\cosh \phi_i>0$. By Proposition \ref{gradient_of_I}, $\nabla_{\q_1}I$ is in the linear subspace $\{ (x,y,z,w)^T\in \R^4| \cosh \phi_1 z- \sinh \phi_1 w=0 \}$. In order to have a central configuration,  $\nabla_{\q_1}U$ must be in the linear subspace, i.e.,
 \[ \nabla_{\q_1} U \cdot (0,0, \cosh \phi_1,\sinh \phi_1)^T=\F_{1z} \cosh \phi_1 - \F_{1w} \sinh\phi_1  =0,   \]
 where $\F_{1z}$ and $\F_{1w} $ stand for the $z$-coordinate  and $w$-coordinate of $\F_1$, respectively.  However, using the explicit form of $\F_1$, we get
 \begin{equation}
  \begin{split}
  &\ \ \ \ \F_{1z} \cosh \phi_1 - \F_{1w} \sinh\phi_1\\
  &= \sum _{i=2}^{N} m_im_1\left(\frac{z_i-\cosh d_{i1} z_1}{\sinh^3 d_{i1}}\cosh \phi_1 -   \frac{w_i-\cosh d_{i1} w_1}{\sinh^3 d_{i1}}\sinh \phi_1\right)\\
  &=   \sum _{i=2}^{N} m_im_1\frac{\rho_i\sinh \phi_i\cosh \phi_1-\rho_i\cosh  \phi_i\sinh \phi_1-\cosh d_{i1} (z_1\cosh \phi_1 - w_1\sinh \phi_1) }{\sinh^3 d_{i1}}\\
  &=  \sum _{i=2}^{N} m_im_1\frac{ \rho_i\sinh (\phi_i - \phi_1)}{\sinh^3 d_{i1}}<0,
  \end{split}
  \notag
  \end{equation}
since $\phi_i \le \phi_1$ for $i\ne 1$ and there is at least one $i$ such that $\phi_i<\phi_1$.

Thus any central configuration must lie on only one such hyperbolic sphere, say $\H^2_{\phi}$. Let $$\chi=(\begin{bmatrix}
     1 & 0 \\
     0  &1 \end{bmatrix}, \begin{bmatrix}
     \cosh \phi & -\sinh \phi \\
     -\sinh\phi  &\cosh \phi  \end{bmatrix})\in  SO(2)\times SO(1,1).$$
Since $\begin{bmatrix}
     \cosh \phi & -\sinh \phi \\
     -\sinh\phi  &\cosh \phi  \end{bmatrix} \begin{bmatrix}
          \rho_i\sinh \phi \\
          \rho_i\cosh \phi  \end{bmatrix} =\begin{bmatrix} 0 \\
                    \rho_i\end{bmatrix},$
$\chi (\H^2_\phi)= \H^2_{xyw}$. This calculation completes the proof.
 \end{proof}

 To offer more insight into this result, we provide a heuristic argument.
 Recall that the Poincar\'{e} ball model of $\H^3$ is
 \[ \left(  \bar{x}^2+\bar{y}^2+\bar{z}^2<1, \ ds^2=\frac{4(d\bar{x}^2+d\bar{y}^2+d\bar{z}^2)}{(1-\bar{x}^2-\bar{y}^2-\bar{z}^2)^2}  \right).  \]
 In this model, a two-dimensional hyperbolic sphere is the intersection of the  three-dimensional ball with a two-dimensional Euclidean sphere that orthogonally intersects the boundary of the ball.
 The hyperbolic spheres $\H^2_\phi$ defined above are those that intersect the $\bar{z}$-axis orthogonally, \cite{BH99}.
 For example, $\H^2_{xyw}$ in this model is the disk in the plane $\bar{z}=0$.  Now suppose that   $\q_i \in \H^2_{\phi_i}$ and  $\phi_1> \phi_2$. Proposition \ref{gradient_of_I} implies that $\nabla_{\q_1}I \in T_{\q_1} \H^2_{\phi_1}$, as showed in Figure \ref{fig:ball}. However, $\F_{12}$ points towards the lower hyperbolic sphere $\H^2_{\phi_2}$. Thus $\nabla_{\q_1}I$ and $\nabla_{\q_1}U$ cannot be collinear.
 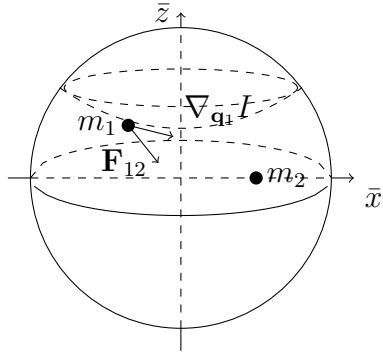
\begin{figure}[!h]
                  \centering
                 \begin{tikzpicture}

      \draw (0,  1.3) circle (2);
         \draw [dashed] (0, -0.7)--(0, 3.3);
     \draw [dashed] (-2, 1.3)--(2, 1.3);
        \draw [->] (0, 3.3)--(0, 3.5) node (zaxis) [left] {$\bar{z}$};
          \draw [->] (2, 1.3)--(2.3, 1.3) node (zaxis) [below right] {$\bar{x}$};
 \draw [-] (-2.3, 1.3)--(-2, 1.3);
 \draw [-] (0,-1)--(0,-0.7);
      \draw [dashed][domain=-2:2] plot (\x, {1.3+sqrt(4-\x*\x)/4});
      \draw [domain=-1.95:1.95] plot (\x, {1.3-sqrt(4-\x*\x)/4});

       \draw[dashed]  (-1.61,2.5) to [out=325,in=215] (1.61,2.5);
       \draw [dashed] (0,2.5) ellipse (1.55  and .25);

        \fill (-0.7,2) circle (2.5pt) node[left] {$m_1$};
      \fill (1,1.3) circle (2.5pt) node[right] {$m_2$};
      \draw [->]  (-0.7,2)--(-0.3, 1.5) node  [left] {$\F_{12}$};
      \draw [->]  (-0.7,2)--(-0.1, 1.85) node  [above right] {$\nabla_{\q_1}I $};

                 \end{tikzpicture}
                 \caption{ A configurations in the  Poincar\'e ball model.} \label{fig:ball}
                   \end{figure}

 \begin{remark}
Recall that there are central configurations not in a plane (called spatial central configurations) in the Newtonian $N$-body problem, such as the regular tetrahedron for any given four masses.
However, those spatial configurations do not lead to rigid motions. Thus if we defined central configurations  in $\R^3$ as those that lead to rigid motions,  there would be no spatial ones.
\end{remark}

  \section{Central configurations in $\S^3$}\label{s3}
Apparently, the compactness of $\S^3$ makes the set of central configuration in it richer than in $\H^3$.
With computations similar to the ones we performed in $\H^3$, we can get the following  necessary conditions for  central configurations in $\S^3$, 
\[\sum _{j=1, j\ne i}^{N} m_im_j\frac{ \rho_j\sin (\phi_j - \phi_i)}{\sin^3 d_{ij}}=0,\ i=\overline{1,N}.\]
These equations, however, do not rule out the existence of three-dimensional central configurations.  For example, we have the  pentatope special central configuration of five equal masses, \cite{DSZ-B},
 \begin{align*}
          x_1&=1,& y_1&=0,& z_1&=0,& w_1&=0, \displaybreak[0]\\
           x_2&=-1/4,& y_2&=\sqrt{15}/4,& z_2&=0,& w_2&=0, \displaybreak[0]\\
           x_3&=-1/4,& y_3&=-\sqrt{5}/(4\sqrt{3}),& z_3&=\sqrt{5}/\sqrt{6},& w_3&=0,\displaybreak[0]\\
          x_4&=-1/4,& y_4&=-\sqrt{5}/(4\sqrt{3}),& z_4&=-\sqrt{5}/(2\sqrt{6}),& w_4&=\sqrt{5}/(2\sqrt{2}),  \displaybreak[0]\\
           x_5&=-1/4,& y_5&=-\sqrt{5}/(4\sqrt{3}),& z_5&=-\sqrt{5}/(2\sqrt{6}),& w_5&=-\sqrt{5}/(2\sqrt{2}).
          \end{align*}
 However, all known three-dimensional central configurations are special central configurations (i.e., $\lambda=0$). We will further construct a two-parameter family of ordinary three-dimensional central configurations of five masses. Suppose that the masses are $m_1=m_2=m, m_3=m_4=m_5=1$, and their positions are given by
 \begin{align*}
          x_1&=0,& y_1&=0,& z_1&=\cos \theta,& w_1&=\sin \theta, \displaybreak[0]\\
          x_2&=0,& y_2&=0,& z_2&=\cos \theta,& w_2&=-\sin \theta, \displaybreak[0]\\
          x_3&=r,& y_3&=0, & z_3&=c,&  w_3&=0,\displaybreak[0]\\
          x_4&=r\cos  \frac{2\pi}{3},& y_4&=r\sin  \frac{2\pi}{3}, & z_4&=c,&  w_4&=0,\displaybreak[0]\\
          x_5&=r\cos  \frac{4\pi}{3},& y_5&=r\sin  \frac{4\pi}{3}, & z_5&=c,&  w_5&=0,
          \end{align*}
where $c\in (-1,1)\setminus\{0\}$, $r>0$, $r^2+c^2=1$ and $\th\in (0, \pi)\setminus \{\frac{\pi}{2}\}$. Such configurations depend on two parameters, $c$ and $\th$, and we denote them by $\q(c, \th)$. It is easy to see that these configurations are not confined to any two-dimensional sphere. In  Figure \ref{fig:s3}, we illustrate such a configuration in a $\R^3$ hyperplane  by the stereographic projection of $\S^3$ from $(0,0,1,0)$ onto the corresponding equatorial $\R^3$ hyperplane, i.e.,  
\[ \bar{x}=\frac{x}{1-z}, \ \ \bar{y}=\frac{y}{1-z}, \ \bar{w}=\frac{w}{1-z}. \]
\begin{figure}[!h]
           	\centering
\begin{tikzpicture}
\draw [->] (0, -0.8)--(0, 3.5) node (zaxis) [left] {$\bar{w}$};
\draw [->] (-2, 1.3)--(2, 1.3) node (xaxis) [below right] {$\bar{x}$};
\draw [->] (-1.7, 0.1)--(1.7, 2.5) node (yaxis) [right] {$\bar{y}$};
\draw [-] (0,1.3) ellipse (1.5 and .45);
\draw (0,  1.3) circle (1.5);

\draw [->]  (-2,2.5)--(-1.5, 1.85) node at (-2,2.5) [above] {$\bar{x}^2 +\bar{y}^2 + \bar{w}^2 =1 $};
\fill (0,3.2) circle (2.5pt) node[right]  {$m_1$};
\fill (0,-0.6) circle (2.5pt) node[right]  {$m_2$};
 \fill (1,1.3) circle (2.5pt) node[right] {$m_3$};
 \fill (-0.3,1.6) circle (2.5pt) node[above left] {$m_4$};
  \fill (-0.8,1.1) circle (2.5pt) node[below left] {$m_5$};

  \end{tikzpicture}
 \caption{ A configuration $\q(c, \th)$ with $(c,\th) \in (-1,0)\times ( 0, \frac{\pi}{2})$.} \label{fig:s3}
                   \end{figure}
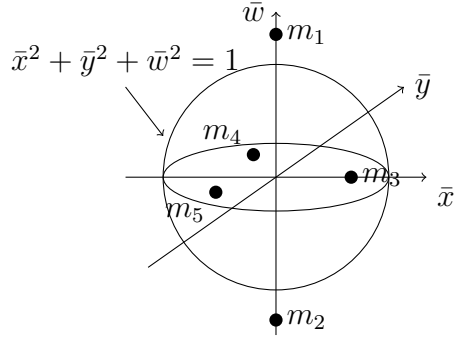
\begin{proposition}
For any $(c,\th) \in (-1,0)\times ( 0, \frac{\pi}{2})$, and $(c,\th) \in (0,1)\times ( \frac{\pi}{2}, \pi)$, the configurations  $\q(c, \th)$  constructed above   are  central configurations if
\begin{equation} \label{condition}
 m= -\frac{3c |\sin^3 2\th|}{2\cos \th (1-c^2 \cos^2 \th)^{3/2}}.
 \end{equation}
Generally,  they are   ordinary central configurations.
\end{proposition}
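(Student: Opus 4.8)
The plan is to verify the central configuration equations \eqref{CCE} directly for the explicit family $\q(c,\th)$, exploiting the symmetry of the configuration to reduce the number of independent equations. The configuration has an obvious symmetry group: the $\Z_2$ swapping $m_1\leftrightarrow m_2$ (realized by $w\mapsto -w$), the cyclic $\Z_3$ permuting $m_3,m_4,m_5$ (a rotation in the $xy$-plane by $2\pi/3$), and a reflection. First I would observe that this symmetry forces most of the equations to coincide: the four equations for $i=1$ collapse (by the $\Z_2$) to the same content as those for $i=2$, and the three equations for $i\in\{3,4,5\}$ are carried into one another by the $\Z_3$. So it suffices to check \eqref{CCE} for a single particle of each type, say $i=1$ and $i=3$. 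Moreover the $SO(2)\times SO(2)$-invariance and the planar symmetries will kill the $x$- and $y$-components automatically for $i=1$ (where $x_1=y_1=0$) and reduce the $i=3$ equations to a single scalar identity in the radial direction; the $z$- and $w$-components are where the real content lies.

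Next I would compute the relevant geodesic distances and forces. There are only three distinct pairwise distances: $d_{12}$ between the two poles $m_1,m_2$, with $\q_1\cdot\q_2=\cos^2\th-\sin^2\th=\cos 2\th$; the distance $d_{1k}$ ($k\ge 3$) between a pole and a ring particle, with $\q_1\cdot\q_k=c\cos\th$ (independent of $k$, and likewise $\q_2\cdot\q_k=c\cos\th$); and the distance $d_{jk}$ between two ring particles ($j,k\ge 3$), with $\q_j\cdot\q_k=r^2\cos\frac{2\pi}{3}+c^2=c^2-\tfrac12 r^2$. I would then write $\F_1=\F_{12}+\sum_{k\ge 3}\F_{1k}$ and extract its $z$- and $w$-components using $\F_{ij}=m_im_j[\q_j-\cos d_{ij}\q_i]/\sin^3 d_{ij}$. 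By the $\Z_3$ symmetry the three vectors $\F_{1k}$ sum to something pointing along the $zw$-plane (their $xy$-components cancel), so $\F_{1z}$ and $\F_{1w}$ are explicit functions of $c,\th,m$. Comparing with $\nabla_{\q_1}I = 2m_1(0,0,-z_1(x_1^2+y_1^2),-w_1(x_1^2+y_1^2))^T = 0$ since $x_1=y_1=0$ — so in fact $\nabla_{\q_1}I$ vanishes at this particle! That means the $i=1$ (and $i=2$) central configuration equations reduce to the \emph{special} condition $\F_1=0$, i.e. $\F_{1z}=\F_{1w}=0$ with no $\lambda$ appearing. This gives two scalar equations; using $r^2+c^2=1$ and the product-to-sum identities one should find they are equivalent to the single stated relation \eqref{condition} (the two components being proportional once the geometry is used, the proportionality constant being essentially $\tan\th$ versus $\cot\th$ — this is where I would be most careful).

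Then I would turn to $i=3$ (equivalently $i=4,5$). Here $z_3=c$, $w_3=0$, $x_3=r$, $y_3=0$, so $\nabla_{\q_3}I = 2(r(0-c^2)\cdot(\dots))$; explicitly from \eqref{I} in the $\S^3$ case it is $2m_3(x_3(w_3^2+z_3^2),\,y_3(w_3^2+z_3^2),\,-z_3(x_3^2+y_3^2),\,-w_3(x_3^2+y_3^2))^T = 2(rc^2,0,-cr^2,0)^T$, a nonzero vector lying in the $xz$-plane. The force $\F_3 = \F_{31}+\F_{32}+\F_{34}+\F_{35}$: by the reflection symmetry $w\mapsto -w$ the contributions $\F_{31}$ and $\F_{32}$ have equal $x$- and $z$-components and opposite $w$-components, so their sum lies in the $xz$-plane; and $\F_{34}+\F_{35}$ also lies in the $xyz$-subspace with, after using the $2\pi/3$ geometry, no $w$-component and a net $xy$-part along the radial direction of $m_3$. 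So $\F_3$ lies in the $xz$-plane, matching $\nabla_{\q_3}I$, and the $y$- and $w$-components give $0=0$ automatically. The remaining content is: the $x$- and $z$-components of $\F_3$ must be $\lambda$ times $2(rc^2,-cr^2)$. That is two equations determining $\lambda$ consistently — I would show both give the same $\lambda$ (forced, since $\F_3\cdot\q_3=0$ and $\nabla_{\q_3}I\cdot\q_3=0$, so both vectors lie in the $2$-dimensional tangent plane $T_{\q_3}\S^3$, and within the $xz$-plane that tangent space is $1$-dimensional, spanned by $(c,-r)$ up to scale — hence $\F_3$ and $\nabla_{\q_3}I$ are \emph{automatically} parallel and only the magnitude must be matched, which defines $\lambda$). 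Crucially one must then check this $\lambda$ is the \emph{same} constant as the one from the $i=1,2$ equations — but those gave $\lambda\cdot 0 = \F_1$, i.e. they only demand $\F_1=0$ and place no constraint on $\lambda$. Hence once \eqref{condition} holds (making $\F_1=\F_2=0$), the value of $\lambda$ is freely determined by the $i=3$ equation, and $\q(c,\th)$ is a genuine central configuration.

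Finally, for the last sentence ("generally, they are ordinary central configurations"), I would compute $\lambda$ explicitly from the $i=3$ equation as a function of $(c,\th)$ and show it is not identically zero — indeed it vanishes only on a lower-dimensional subset of the parameter region, so for generic $(c,\th)$ in the stated rectangles $\lambda\ne 0$. The main obstacle I anticipate is purely computational bookkeeping: correctly assembling the $\Z_3$-symmetrized sums $\F_{34}+\F_{35}$ and $\F_{1k}$ (the $\cos\frac{2\pi}{3}=-\tfrac12$, $\sin\frac{2\pi}{3}=\tfrac{\sqrt3}{2}$ terms), simplifying $\sin^3 d_{ij}$ via $\sin^2 d_{ij}=1-(\q_i\cdot\q_j)^2$, and verifying that the two scalar equations from the $i=1$ particle are genuinely proportional so that they collapse to the single relation \eqref{condition} — including tracking the sign and the absolute value $|\sin^3 2\th|$, which comes from $\sin^3 d_{12}=|\sin d_{12}|^3$ with $d_{12}=\arccos(\cos2\th)$ possibly exceeding $\pi/2$. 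The sign restrictions $(c,\th)\in(-1,0)\times(0,\pi/2)$ and $(0,1)\times(\pi/2,\pi)$ are exactly what make the right-hand side of \eqref{condition} positive, so $m>0$, and I would note this at the end.
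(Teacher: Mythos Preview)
Your proposal is correct and follows the same overall strategy as the paper: reduce by symmetry to checking $i=1$ and $i=3$, observe that $\nabla_{\q_1}I=0$ forces $\F_1=0$ (yielding \eqref{condition}), and then verify that the $i=3$ equation determines $\lambda$ consistently. The one place you diverge is in handling the $i=3$ equation. The paper splits $U=U_1+U_2$, where $U_1$ involves only the ring particles $m_3,m_4,m_5$, and invokes the known fact (from \cite{DSZ-B}) that the equilateral triangle is already a central configuration with multiplier $\lambda_1=-3/(2\sin^3 d_{34})$; it then computes $\nabla_{\q_3}U_2=\F_{31}+\F_{32}$ directly, reads off $\lambda_2$, and obtains $\lambda=\lambda_1+\lambda_2$ explicitly. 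Your dimension-counting argument---that $\F_3$ and $\nabla_{\q_3}I$ both lie in the $xz$-plane and are tangent to $\S^3$ at $\q_3$, hence lie in the one-dimensional span of $(c,0,-r,0)^T$ and are automatically parallel---is slicker and self-contained, avoiding the citation, but it is less explicit about the value of $\lambda$; you would still need to compute it to finish the ``generally ordinary'' claim, as you acknowledge. (Minor slip: $T_{\q_3}\S^3$ is three-dimensional, not two-dimensional, but your argument only uses its intersection with the $xz$-plane, which is indeed one-dimensional, so the logic stands.) Your remark that the two scalar conditions $\F_{1z}=0$, $\F_{1w}=0$ collapse to one is exactly right and for the reason you suspect: $\F_1$ is tangent to $\S^3$ at $\q_1$ and lies in the $zw$-plane, so it is a scalar multiple of $(0,0,\sin\th,-\cos\th)^T$, which is precisely what the paper's computation exhibits.
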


\begin{proof}
We check that the central configuration equations $\nabla _{\q_i} U= \lambda \nabla _{\q_i} I, i=1, \cdots, 5$,  are satisfied.
The function $U$ can be written as $U=U_1+U_2$, where
\[   U_1= \cot d_{34} + \cot d_{45}+\cot d_{35} ,\
   U_2= m^2 \cot d_{12} +m\sum_{i=3}^5 (\cot d_{1i} +  \cot d_{2i}).\]
 Note that the three equal masses $m_3$, $m_4$, and $m_5$ form an ordinary central configuration themselves, i.e.,  $\nabla _{\q_i} U_1= \lambda_1 \nabla _{\q_i} I$, for $i= 3,4,5$,  $\lambda_1= \frac{-3}{2\sin^3 d_{34}}$, \cite{DSZ-B}.  Note that $\nabla_{\q_1}I =\nabla_{\q_2}I=0$ by equation \eqref{I}. Thus $\nabla _{\q_i} U= \lambda \nabla _{\q_i} I$ is satisfied  if and only if there is some constant $\lambda_2$ such that
 \[ \nabla _{\q_i} U_2= \lambda_2 \nabla _{\q_i} I, i=3,4,5, \ \ {\rm and }\ \ \F_1=\F_2=0.  \]
 By symmetry, we only need to check $\nabla _{\q_3} U_2= \lambda_2 \nabla _{\q_3} I$, and  $\F_1=0$.

Note that $d_{13}=d_{23}=d_{14}=d_{24}=d_{15}=d_{25}$, $d_{34}=d_{45}=d_{35}$,
and
\[ \cos d_{12} = \cos 2\th, \ \cos d_{13} =c\cos \theta, \ \ \cos d_{34} =\frac{3}{2}c^2-\frac{1}{2}. \]
Some straightforward computation shows
\begin{align*}
   \nabla _{\q_3} U_2&= \F_{31}+\F_{32}= \frac{m(\q_1-\cos d_{13}\q_3)}{\sin^3 d_{13}} + \frac{m(\q_2-\cos d_{23}\q_3)}{\sin^3 d_{23}}\\
   &= \frac{m}{\sin^3 d_{13}}(\q_1+\q_2- 2\cos d_{13} \q_3)=\frac{m}{\sin^3 d_{13}} \left( (0,0,2\cos \th,0)^T-2c\cos \th  ( r,0,c,0)^T\right) \\
   &=\frac{-2mr \cos \th}{\sin^3 d_{13}} (c, 0, -r, 0)^T
   \end{align*}
Using equation \eqref{I}, we obtain $\nabla _{\q_3}I=2rc(c, 0, -r, 0)^T$. Thus we can write that
\[ \nabla _{\q_3} U_2= \lambda_2 \nabla _{\q_3} I, \   \lambda_2=\frac{-m \cos\th}{ c \sin^3 d_{13}}. \]
By direct computation, we obtain
\begin{equation}
 \begin{split}
 \F_1&= \F_{12}+ \sum _{j=3}^5\F_{1j}
= \frac{m^2}{|\sin ^3 2\th|}(\q_2-\cos 2\th \q_1) + \sum _{i=3}^5\frac{m}{\sin ^3 d_{13}}(\q_i- \cos d_{13}\q_1)\\
   &= \frac{m^2}{|\sin ^3 2\th|}(\q_2-\cos 2\th \q_1) + \frac{m}{\sin ^3 d_{13}}(\sum _{i=3}^5\q_i- 3c \cos \th \q_1)\\
   &= m \sin \th \left(  \frac{2m \cos \th }{|\sin ^3 2\th|} +
      \frac{3c}{\sin ^3 d_{13}} \right) (0,0, \sin \th, -\cos \th)^T.
 \end{split}
 \notag
 \end{equation}
 Thus $\F_1=0$ if and only if $m= -\frac{3c |\sin^3 2\th|}{2\cos \th (1-c^2 \cos^2 \th)^{3/2}}$. Since we need positive masses, $c \cos \th$ needs to be negative.

 We have thus obtained a two-parameter family of central configurations $\q(c, \th)$ for any $(c,\th) \in (-1,0)\times ( 0, \frac{\pi}{2})$, and $(c,\th) \in (0,1)\times ( \frac{\pi}{2}, \pi)$. The central configuration equations $\nabla _{\q_i} U= \lambda(c, \th) \nabla _{\q_i} I, i=1, \cdots, 5$,  are satisfied,  and the constant is 
 \begin{align*}
 \lambda (c, \th)&= \lambda_1+\lambda_2=\frac{-3}{2\sin^3 d_{34}} - \frac{m \cos \th}{c \sin^3 d_{13}}= \frac{-3}{2\sin^3 d_{34}} + \frac{3|\sin^3 2\th|} {2 \sin^6 d_{13}} \\
               &= \frac{3}{2}\left( \frac{-8}{3\sqrt{3}(1+3c^2)^{3/2} (1-c^2)^{3/2} }  + \frac{|\sin^3 2\th|}{ (1-c^2 \cos^2 \th)^{3}} \right),
 \end{align*}
  which is zero on a one-dimensional manifold. Factually, it is homeomorphic to two open unit intervals.
  Thus generally, $\q(c, \th)$ are ordinary central configurations.  This remark completes the proof.
 \end{proof}

 Moreover, if $(c,\th) \in (-1,0)\times ( 0, \frac{\pi}{2})$, then the masses $m_3, m_4, m_5$ are contained in the unit ball, $\bar{x}^2 +\bar{y}^2 + \bar{w}^2 \le1$, and the masses $m_1, m_2$ are outside, see Figure \ref{fig:s3}.  This happens because
 \[  \bar{w}_1 = \frac{w_1}{1-z_1}= \frac{\sin \th}{1-\cos \th}>1,\
  \bar{x}_3^2 +\bar{y}_3^2= (\frac{x_3}{1-z_3})^2+ (\frac{y_3}{1-z_3})^2=\frac{1+c}{(1-c)}<1. \]
 Similarly, if $(c,\th) \in (0,1)\times (  \frac{\pi}{2}, \pi)$, then masses $m_3, m_4, m_5$ are outside, but the masses $m_1, m_2$ are inside the ball.

 Obviously, we can still obtain central configurations if we substitute the equilateral triangle by  a regular $n$-gon with equal masses,  and generally they are ordinary ones. \\

\textbf{Acknowledgements.} The authors thank F. Diacu for reading the original manuscript and making many useful suggestions. Suo Zhao is supported by NSFC 11501530 and NSFC 11571242. Shuqiang Zhu is funded by a University of Victoria Scholarship and a David and Geoffery Fox Graduate Fellowship.

\end{document}